\documentclass{amsart}
\usepackage[style=alphabetic]{biblatex} 
\addbibresource{bibliography.bib}

\usepackage{tikz}
\usepackage{amssymb}
\usepackage{amsthm}
\usepackage[all]{xy}
\usepackage{microtype}
\usepackage{xcolor}
\usepackage{hyperref}
\usepackage[nameinlink]{cleveref}

\hypersetup{
 colorlinks,
 linkcolor={teal},
 citecolor={teal},
 urlcolor={teal}
}

\oddsidemargin =10mm
\evensidemargin =10mm
\topmargin =5mm
\textwidth =150mm
\textheight =200mm
\calclayout

\DeclareFieldFormat
  [article,book,inbook,incollection,inproceedings,patent,thesis,unpublished]
  {title}{\emph{#1\isdot}}

\theoremstyle{plain}
	\newtheorem{theorem}{Theorem}
	\newtheorem{lemma}[theorem]{Lemma}

	\newtheorem{proposition}[theorem]{Proposition}
	
        \newtheorem{problem}[theorem]{Problem}
        \newtheorem{propdef}[theorem]{Proposition-Definition}

\theoremstyle{definition} 
	
	\newtheorem{definition}[theorem]{Definition}

\everymath{\displaystyle}

\begin{document}
\title{Asymptotic Analysis of Infinite Decompositions of a Unit Fraction into Unit Fractions}
\author[Y. Kamio]{Yuhi Kamio}
\address{College of Arts and Sciences, University of Tokyo, 3-8-1 Komaba, Meguro-ku, Tokyo 153-8914, Japan}
\email{emirp13@g.ecc.u-tokyo.ac.jp}

\date{\today}
\subjclass{11D68,11A67}
\keywords{Egyptian fractions, Sylvester’s sequence}

\begin{abstract}
Paul Erdős posed a problem on the asymptotic estimation of decomposing 1 into a sum of infinitely many unit fractions in \cite{Erd80}. We point out that this problem can be solved in the same way as the finite case, as shown in \cite{Sou05}. 
\end{abstract}

\maketitle
\setcounter{tocdepth}{1}

\enlargethispage*{20pt}
\thispagestyle{empty}

\begin{section}{introduction}

In Section 4 of \cite{Erd80}, Paul Erdős posed many problems about decomposing numbers into sums of unit fractions. One of these problems is the following:

\begin{problem}[\cite{Erd80}, p.41]\label{main_problem}\footnote{We correct the erratum as in \cite{erd_pr}. In the original literature, $u_{i+1}$ was defined as $u_i(u_i+1)$ instead of $u_i(u_i+1)+1$.}
Let $u_i$ be the Sylvester sequence, i.e., $u_0=1$ and $u_{i+1}=u_i(u_i+1)+1$. Then, we have $\displaystyle{\sum_{i=1}^{\infty} \frac{1}{u_i}=1}$.
If $a_1 \leq a_2 \leq a_3 \leq \cdots$ is another sequence with $\displaystyle{\sum_{i=1}^{\infty} \frac{1}{a_i}=1}$, is it true that
    \begin{equation*}
        \liminf_{i\to\infty} a_i^{2^{-i}} < \lim_{i\to \infty} u_i^{2^{-i}} = 1.2640...\text{?}
    \end{equation*}
\end{problem}
This problem remains unsolved to this day. 

On the other hand, it is natural to consider the "finite version" of this problem, i.e.:
\begin{problem}
    Assume that $a_1 \leq a_2 \leq a_3 \leq \cdots \leq a_n$ satisfies $\displaystyle{\sum_{i=1}^n \frac{1}{a_i}=1}$. Is it true that
        \begin{equation*}
            a_n \leq u_n - 1
        \end{equation*}
    and does equality hold when $a_i = u_i$ for all $i < n$?
\end{problem}
This problem has been solved in various ways (\cite{Tak21}, \cite{Cur22}, \cite{Sou05}). In this paper, we solve \cref{main_problem} and its generalization (\cref{main}) based on the arguments in \cite{Sou05}.
\end{section}

\begin{section}{proof of main theorem}

\begin{definition}[Generalized Sylvester sequence]
    Let $n$ be a positive integer. We define the sequence $s_i(n)$ by
    \begin{align*}
        s_1(n) &= n+1, \\
        s_{i+1}(n) &= s_i(n)^2 - s_i(n) + 1.
    \end{align*}
\end{definition}

The sequence $s_i(1)$ is the ordinary Sylvester sequence.

We enumerate some properties of the generalized Sylvester sequence.

\begin{propdef}\label{prop_cn}
    Let $n$ be a positive integer. Then,
    \begin{enumerate}
        \item $\lim_{i\to\infty} s_i(n)^{2^{-i}}$ exists. We denote this limit by $c_n$.
        \item $\sqrt{n} < c_n < \sqrt{n+1}$. In particular, if $n < l$ for a positive integer $l$, then $c_n < c_l$.
    \end{enumerate}
\end{propdef}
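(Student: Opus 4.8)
The plan is to linearise the recursion by taking square roots. Writing it as $s_{i+1}(n) = s_i(n)^2 b_i$ with
\[
b_i := 1 - \frac{1}{s_i(n)} + \frac{1}{s_i(n)^2} = \frac{s_{i+1}(n)}{s_i(n)^2} \in (0,1),
\]
a one-line induction gives $s_i(n) = (n+1)^{2^{i-1}}\prod_{j=1}^{i-1} b_j^{\,2^{i-1-j}}$, hence
\[
s_i(n)^{2^{-i}} = \sqrt{n+1}\,\prod_{j=1}^{i-1} b_j^{\,2^{-j-1}}.
\]
Before using this I would record two elementary facts: $s_i(n) \ge n+1 \ge 2$ for every $i$, and $(s_i(n))_i$ is strictly increasing because $s_{i+1}(n) - s_i(n) = (s_i(n)-1)^2 > 0$; in particular $b_i \in [3/4,1)$ for all $i$.

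For part (1), taking logarithms turns the finite product into the partial sums of $\sum_{j\ge1} 2^{-j-1}\log b_j$; since $|\log b_j| \le \log\frac43$, this series converges absolutely, so $s_i(n)^{2^{-i}}$ converges and
\[
c_n = \sqrt{n+1}\,\prod_{j=1}^{\infty} b_j^{\,2^{-j-1}}.
\]
Because $b_1 = s_2(n)/s_1(n)^2 = (n^2+n+1)/(n+1)^2$, pulling the first factor out of $c_n^2 = (n+1)\prod_{j\ge1} b_j^{\,2^{-j}}$ yields the identity I would carry into part (2):
\[
c_n^2 = \sqrt{n^2+n+1}\,\prod_{j=2}^{\infty} b_j^{\,2^{-j}}.
\]

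For part (2), the upper bound is immediate: every $b_j < 1$, so $\prod_{j\ge1}b_j^{\,2^{-j-1}} < 1$ and $c_n < \sqrt{n+1}$. I expect the lower bound $c_n > \sqrt n$ to be the main obstacle: the crude estimate $s_i(n) > n^{2^{i-1}}$ gives only $c_n \ge \sqrt n$ with no slack, because $n^{2^{i-1}}$ undershoots the true size of $s_i(n)$ by a doubly exponential factor, so one must instead exploit that $b_j$ is already extremely close to $1$ for $j \ge 2$. From the last display, $c_n^2 > n$ is equivalent to $\sum_{j\ge2} 2^{-j}\log(1/b_j) < \frac12\log\!\big(1+\frac{n+1}{n^2}\big)$. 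For the left-hand side I would use $b_j > 1 - \frac{1}{s_j(n)}$, so $\log(1/b_j) < \log\!\big(1+\frac{1}{s_j(n)-1}\big) < \frac{1}{s_j(n)-1} \le \frac{1}{s_2(n)-1} = \frac{1}{n^2+n}$ for all $j \ge 2$ (monotonicity again), whence the left-hand side is at most $\frac{1}{n^2+n}\sum_{j\ge2}2^{-j} = \frac{1}{2n(n+1)}$. For the right-hand side I would use $\log(1+x) > \frac{x}{1+x}$, giving $\frac12\log\!\big(1+\frac{n+1}{n^2}\big) > \frac{n+1}{2(n^2+n+1)}$. It then suffices to verify $\frac{1}{2n(n+1)} \le \frac{n+1}{2(n^2+n+1)}$, i.e.\ $n^2+n+1 \le n(n+1)^2$, i.e.\ $n^3+n^2-1 \ge 0$, which holds for all $n \ge 1$ (the tightest case being $n=1$). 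Finally the "in particular" clause is immediate: if $n < l$ then $c_n < \sqrt{n+1} \le \sqrt l < c_l$.
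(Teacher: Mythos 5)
Your proof is correct, but it takes a genuinely different and considerably more computational route than the paper. The paper's argument is a two-line sandwich: from $s_{i+1}(n)<s_i(n)^2$ the sequence $s_i(n)^{2^{-i}}$ is strictly decreasing (existence of the limit and $c_n<\sqrt{n+1}$ at once), and from $s_{i+1}(n)-1=s_i(n)(s_i(n)-1)>(s_i(n)-1)^2$ the sequence $(s_i(n)-1)^{2^{-i}}$ is strictly increasing with the same limit, so $c_n>(s_1(n)-1)^{1/2}=\sqrt{n}$. This neatly sidesteps the obstacle you correctly identify --- that $s_i(n)>n^{2^{i-1}}$ has no slack --- by shifting to $s_i(n)-1$, for which the doubling recursion goes the \emph{other} way; you should note this trick, as the same shift reappears in \cref{prop_syl}. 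Your route instead extracts the exact infinite product $c_n=\sqrt{n+1}\,\prod_{j\ge1}b_j^{2^{-j-1}}$ and then beats the deficit $\sum_{j\ge2}2^{-j}\log(1/b_j)$ against $\tfrac12\log(1+(n+1)/n^2)$ by hand; all your estimates check out (including the boundary case $n=1$, where $n^3+n^2-1=1>0$). What your approach buys is an explicit, rapidly convergent product formula for $c_n$ and a quantitative lower bound; what it costs is length --- the paper gets both inequalities, with strictness, from monotonicity alone, and as a free byproduct the sharper two-sided bounds $(s_j(n)-1)^{2^{-j}}<c_n<s_j(n)^{2^{-j}}$ for every $j$.
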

\begin{proof}
    (1): As $s_{i+1}(n)=s_i(n)^2-s_i(n)+1 < s_i(n)^2$, $
    \{s_i(n)^{-2^i}\}_i$ is strictly decreasing sequence. Therefore, $\lim_{i\to \infty} s_i(n)^{-2^i}$ exists and $c_n<s_1(n)^{2^{-1}}=\sqrt{n+1}$. \\
    (2): As $s_{i+1}(n)-1=s_i(n)^2-s_i(n) > (s_i(n)-1)^2$, $
    \{(s_i(n)-1)^{-2^i}\}_i$ is strictly increasing sequence. Therefore, $c_n>(s_1(n)-1)^{2^{-1}}=\sqrt{n}$, and the first half of proposition follows.
    The last half follows easily from the first. 
\end{proof}
\begin{proposition}\label{prop_syl}
Let $n, j$ be positive integers. Then, the following statements hold:
\begin{enumerate}
    \item $\sum_{i=1}^{j-1} \frac{1}{s_i(n)}+\frac{1}{s_j(n)-1}=\frac{1}{n}.$
    \item $s_{j}(n)-1=n\prod_{i=1}^{j-1} s_i(n).$
    \item Let $i$ be a positive integer. Then, $s_{i+j-1}(n)=s_i(s_j(n)-1).$
    \item $c_n^{2^{j-1}}=c_{s_j(n)-1}.$
\end{enumerate}
\end{proposition}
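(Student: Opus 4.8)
The plan is to prove (1)--(3) by elementary inductions and then deduce (4) from (3). The single algebraic fact driving the first three parts is that, by the defining recursion,
\[
  s_{j+1}(n) - 1 = s_j(n)^2 - s_j(n) = s_j(n)\bigl(s_j(n)-1\bigr).
\]
For (1) I would induct on $j$. The base case $j=1$ asserts $\tfrac{1}{s_1(n)-1}=\tfrac1n$, which holds because $s_1(n)-1=n$. For the inductive step it suffices to check
\[
  \frac{1}{s_j(n)} + \frac{1}{s_{j+1}(n)-1} = \frac{1}{s_j(n)-1},
\]
and substituting $s_{j+1}(n)-1 = s_j(n)\bigl(s_j(n)-1\bigr)$ rewrites the left-hand side as $\dfrac{(s_j(n)-1)+1}{s_j(n)\bigl(s_j(n)-1\bigr)}$, which equals $\dfrac{1}{s_j(n)-1}$.

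Part (2) goes the same way: the case $j=1$ reads $s_1(n)-1=n$ with the empty product equal to $1$, and the step is $s_{j+1}(n)-1 = s_j(n)\bigl(s_j(n)-1\bigr) = s_j(n)\cdot n\prod_{i=1}^{j-1}s_i(n)$; alternatively (2) falls out of (1) after clearing denominators, but the direct induction is shorter. For (3) I would fix $j$ and induct on $i$. When $i=1$ the claim is $s_1(s_j(n)-1)=s_j(n)$, true since $s_1(m)=m+1$. For the step, apply the recursion to both $s_{i+j}(n)$ and $s_{i+1}(s_j(n)-1)$, each of which has the form $t^2-t+1$, and plug in the inductive hypothesis $s_{i+j-1}(n)=s_i(s_j(n)-1)$.

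Finally, for (4), using (3) we have
\[
  c_{s_j(n)-1} = \lim_{i\to\infty} s_i\bigl(s_j(n)-1\bigr)^{2^{-i}} = \lim_{i\to\infty} s_{i+j-1}(n)^{2^{-i}}.
\]
Setting $k=i+j-1$ gives $2^{-i}=2^{\,j-1}\cdot 2^{-k}$, so the limit becomes $\lim_{k\to\infty}\bigl(s_k(n)^{2^{-k}}\bigr)^{2^{\,j-1}}$; since the map $t\mapsto t^{2^{\,j-1}}$ is a polynomial (hence continuous) and $s_k(n)^{2^{-k}}\to c_n$ by \cref{prop_cn}, this equals $c_n^{2^{\,j-1}}$. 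The only step that requires the slightest care is this last one — the reindexing of the defining limit and the interchange of the limit with the power map — and even that is routine, so I do not anticipate a genuine obstacle.
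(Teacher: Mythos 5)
Your proposal is correct and follows essentially the same route as the paper: the identity $s_{j+1}(n)-1=s_j(n)\bigl(s_j(n)-1\bigr)$ drives the inductions for (1)--(3), and (4) is obtained exactly as in the paper by reindexing the defining limit and invoking (3). Your explicit continuity remark for $t\mapsto t^{2^{j-1}}$ is just a slightly more detailed spelling-out of the paper's one-line limit computation.
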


\begin{proof}
    \begin{enumerate}
        \item This follows from $\frac{1}{s_j(n)-1}-\frac{1}{s_j(n)}=\frac{1}{s_j(n)^2-s_j(n)}=\frac{1}{s_{j+1}(n)-1}$ and induction.
        \item This follows from $s_{j+1}(n)-1=(s_j(n)-1)s_j(n)$ and induction.
        \item Use induction on $i$.
        \item
            \begin{align*}
                c_n^{2^j} &= \lim_{i\to\infty} s_{i}(n)^{2^{j-1-i}} 
                = \lim_{i\to\infty} s_{i+j-1}(n)^{2^{-i}} 
                = \lim_{i\to\infty} s_{i}(s_j(n)-1)^{2^{-i}}
                = c_{s_j(n)-1}.
            \end{align*}
    \end{enumerate}
\end{proof}
The next lemma is essentially the same as the argument in \cite{Sou05}. 

\begin{lemma}\label{pac_sou}  
    Let $u$ be a positive real number, and let $t$ be a positive integer. Let $u_1 \geq u_2 \geq \cdots > 0$ be a sequence of real numbers that satisfies the following equation for $n \geq t$:  
    \[
    \sum_{i=1}^n u_i + u \prod_{i=1}^n u_i = u  
    \]  
    Let $v_1 \geq v_2 \geq \cdots > 0$ be a sequence of real numbers that satisfies the following inequality for $n \geq t$:  
    \[
    \sum_{i=1}^n v_i + u \prod_{i=1}^n v_i \leq u  
    \]  
    Also, assume that $v_n \leq u_n$ for $n < t$. Then, for all $n$,  
    \[
    \sum_{i=1}^n v_i \leq \sum_{i=1}^n u_i.  
    \]  
\end{lemma}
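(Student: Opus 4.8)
The plan is to recast everything in terms of partial sums and products. Write $S_n^u=\sum_{i=1}^n u_i$, $P_n^u=\prod_{i=1}^n u_i$ and likewise $S_n^v,P_n^v$; the goal is $S_n^v\le S_n^u$ for every $n$. For $n\ge t$ the hypothesis on $u$ says $S_n^u=u-uP_n^u$, and subtracting the relations at $n$ and $n+1$ gives the closed recursions $u_{n+1}=\frac{u-S_n^u}{1+(u-S_n^u)}$ and $u-S_{n+1}^u=\frac{(u-S_n^u)^2}{1+(u-S_n^u)}$; the same manipulation applied to $v$ produces only the one-sided bound $v_{n+1}\bigl(1+uP_n^v\bigr)\le u-S_n^v$. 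For $n<t$ the inequality $S_n^v\le S_n^u$ is immediate from $v_i\le u_i$, so the content is the range $n\ge t$, which I would treat by induction on $n$.

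\textbf{Inductive step.} I split on $v_n$ versus $u_n$. If $v_n\le u_n$, then $S_n^v=S_{n-1}^v+v_n\le S_{n-1}^u+u_n=S_n^u$, using only $S_{n-1}^v\le S_{n-1}^u$ (and, at $n=t$, the hypothesis $v_{t-1}\le u_{t-1}$). If $v_n>u_n$, then monotonicity of $(v_i)$ forces $v_i\ge v_n>u_n$ for all $i\le n$, and it now suffices to prove $P_n^v\ge P_n^u$: granting this, the $v$-inequality at $n$ together with the $u$-equation at $n$ gives $S_n^v\le u-uP_n^v\le u-uP_n^u=S_n^u$. To obtain $P_n^v\ge P_n^u$ I would strengthen the inductive hypothesis by also carrying, for $n\ge t$, a multiplicative estimate comparing $uP_n^v\,(u-S_n^v)$ with $(u-S_n^u)^2$ — these coincide for the $u$-sequence — in the conditional form ``if $v_n>u_n$ then $uP_n^v(u-S_n^v)\ge(u-S_n^u)^2$''. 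Feeding the one-sided bound for $v_n$ into the recursions, one checks that $S_{n-1}^v\le S_{n-1}^u$ together with this estimate at $n-1$ forces $S_n^v\le S_n^u$; and, using that $v_n>u_n$ now also supplies a \emph{lower} bound on $v_n$ and that $w\mapsto uP_{n-1}^v\,w\,\bigl((u-S_{n-1}^v)-w\bigr)$ is concave (so its value on an interval is at least the smaller of the endpoint values), one verifies that the conditional estimate is reproduced at index $n$.

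\textbf{Main obstacle.} The delicate case is the index at which $v$ first overtakes $u$: there $v_{n-1}\le u_{n-1}$, so the conditional estimate at $n-1$ is vacuous and the product comparison $P_n^v\ge P_n^u$ may genuinely fail, yet $S_n^v\le S_n^u$ still has to be extracted. The point is that the accumulated slack $S_{n-1}^u-S_{n-1}^v\ge\sum_{i<n}(u_i-v_i)$ absorbs $v_n-u_n$: since $v_n\le v_{n-1}\le u_{n-1}$ and $S_{n-1}^u$ already contains the term $u_{n-1}$, one gets $S_n^v=S_{n-1}^v+v_n\le S_{n-1}^v+u_{n-1}\le S_{n-1}^u+u_n=S_n^u$ once this is arranged carefully. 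Making that comparison airtight — uniformly over all possible positions of the crossover, and at the transition $n=t$ where the $u$-recursion degenerates slightly — is where monotonicity of \emph{both} sequences (automatic for $(u_i)$ beyond index $t$) is used essentially. An alternative that avoids the invariant bookkeeping is to maximize $\sum_{i=1}^n v_i$ over the compact set cut out by the hypotheses and show directly, by a perturbation/exchange argument, that the maximizer is $(u_1,\dots,u_n)$; I would fall back on this if the inductive calibration becomes too fiddly.
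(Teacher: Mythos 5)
Your case analysis matches the paper's up to a point: $n<t$ is trivial, the case $v_n\le u_n$ follows from the induction hypothesis at $n-1$, and the case $\prod_{i=1}^n v_i\ge\prod_{i=1}^n u_i$ closes via $\sum_{i=1}^n v_i\le u-u\prod_{i=1}^n v_i\le u-u\prod_{i=1}^n u_i=\sum_{i=1}^n u_i$. The genuine difficulty, which you correctly isolate but do not resolve, is the remaining case $v_n>u_n$ \emph{and} $\prod_{i=1}^n v_i<\prod_{i=1}^n u_i$. Both of your proposed fixes have gaps. The conditional invariant ``if $v_n>u_n$ then $uP_n^v(u-S_n^v)\ge(u-S_n^u)^2$'' is never actually shown to propagate, and by your own admission it is vacuous at the first crossover index, which is exactly where it would be needed. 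The fallback chain $S_n^v=S_{n-1}^v+v_n\le S_{n-1}^v+u_{n-1}\le S_{n-1}^u+u_n$ is unjustified as written: the second inequality requires $S_{n-1}^u-S_{n-1}^v\ge u_{n-1}-u_n$, a quantitative amount of slack that the inductive hypothesis $S_{n-1}^v\le S_{n-1}^u$ does not provide (since $u_{n-1}\ge u_n$, this is strictly stronger than what you know). Nothing in your argument produces that slack, and the crossover need not happen only once, so even a patched first-crossover analysis would not suffice.

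The idea you are missing is the paper's segment-selection step. In the bad case, let $m$ be the \emph{largest} index with $\prod_{i=m}^{n}v_i<\prod_{i=m}^{n}u_i$; such an $m$ exists (take $m=1$) and $m<n$ because $v_n>u_n$. Maximality forces $\prod_{i=l}^{n}v_i\ge\prod_{i=l}^{n}u_i$ for every $l>m$, and dividing the inequality at $m$ by the one at $l+1$ gives $\prod_{i=m}^{l}v_i<\prod_{i=m}^{l}u_i$ for every $l$ with $m\le l\le n$. Thus the block $(u_m,\dots,u_n)$ dominates $(v_m,\dots,v_n)$ in \emph{all} initial partial products, and the auxiliary proposition from \cite{Sou05} (stated as the next lemma in the paper: for nonincreasing positive tuples, domination of all partial products implies domination of the sum) yields $\sum_{i=m}^{n}v_i\le\sum_{i=m}^{n}u_i$; adding the induction hypothesis at $m-1$ finishes. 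This reduction to the partial-product lemma is the real content of the proof, and neither your invariant bookkeeping nor the undeveloped compactness/exchange fallback substitutes for it.
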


\begin{proof}
    Using induction on $n$. If $n < t$, this inequality is trivial. Assume that this inequality holds for all numbers less than $n$, and that $n \geq t$. If $v_n \leq u_n$, then this follows by the induction hypothesis for $n - 1$. If $\prod_{i=1}^n v_i \geq \prod_{i=1}^n u_i$, then 
    \[
        \sum_{i=1}^n v_i \leq u - u \prod_{i=1}^n v_i \leq u - u \prod_{i=1}^n u_i = \sum_{i=1}^n u_i.
    \]
    So we can assume that $v_n > u_n$ and $\prod_{i=1}^n v_i < \prod_{i=1}^n u_i$. Let $m$ be the maximum integer that satisfies $\prod_{i=m}^n v_i < \prod_{i=m}^n u_i$. By the maximality of $m$, for all $l \geq m$, we have $\prod_{i=m}^l v_i < \prod_{i=m}^l u_i$. By the 
    next lemma, $\sum_{i=m}^n v_i \leq \sum_{i=m}^n u_i$. Combining the induction hypothesis for $m - 1$, we get the desired inequality.
\end{proof}
\begin{lemma}
    Let $x_1 \geq x_2 \geq \cdots \geq x_n > 0$ and $y_1 \geq y_2 \geq \cdots \geq y_n > 0$ be real numbers. Assume that $\prod_{i=1}^j x_i \geq \prod_{i=1}^j y_i$ for all $j\leq n$. Then, 
    \[
        \sum_{i=1}^n x_i \geq \sum_{i=1}^n y_i.
    \]
\end{lemma}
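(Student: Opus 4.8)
The plan is to convert the multiplicative hypothesis into an additive one by passing to logarithms and then close the argument with a summation by parts. The only analytic input I would use is the elementary inequality $\log z \le z-1$, valid for all $z>0$. Applying it to $z=x_i/y_i$ and multiplying through by $y_i>0$ gives $y_i(\log x_i-\log y_i)\le x_i-y_i$ for each $i$; summing over $1\le i\le n$ reduces the lemma to showing $\sum_{i=1}^n y_i(\log x_i-\log y_i)\ge 0$.

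To prove this I would set $c_0=0$ and $c_j=\sum_{i=1}^j(\log x_i-\log y_i)$ for $1\le j\le n$, and observe that $c_j\ge 0$ is exactly the hypothesis $\prod_{i=1}^j x_i\ge\prod_{i=1}^j y_i$. Writing $\log x_i-\log y_i=c_i-c_{i-1}$ and rearranging (Abel summation) gives $\sum_{i=1}^n y_i(c_i-c_{i-1})=y_n c_n+\sum_{i=1}^{n-1}(y_i-y_{i+1})c_i$, and every summand on the right is nonnegative: $y_n>0$ and $c_n\ge 0$, while for $i<n$ one has $y_i-y_{i+1}\ge 0$ because $(y_i)$ is nonincreasing, and $c_i\ge 0$ by hypothesis. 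Hence $\sum_{i=1}^n y_i(\log x_i-\log y_i)\ge 0$, which yields the lemma.

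I do not expect a real obstacle here; the only points requiring care are the index shifts in the summation by parts together with the bookkeeping role of the convention $c_0=0$, and the observation that the monotonicity of $(y_i)$ is precisely what forces the Abel coefficients $y_i-y_{i+1}$ to be nonnegative — so it is the hypothesis $y_1\ge y_2\ge\cdots$ that is doing the work, and the monotonicity of $(x_i)$ is in fact not used. A logarithm-free variant would instead invoke convexity of $\exp$ at the points $a_i=\log x_i$, $b_i=\log y_i$ to obtain $x_i-y_i\ge y_i(a_i-b_i)$, which is the same computation; I would present whichever reads more cleanly in context.
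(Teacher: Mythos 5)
Your proof is correct. The tangent-line inequality $\log z\le z-1$ applied to $z=x_i/y_i$ does give $x_i-y_i\ge y_i(\log x_i-\log y_i)$, the partial-product hypothesis is exactly $c_j\ge 0$ after taking logarithms, and your Abel summation $\sum_{i=1}^n y_i(c_i-c_{i-1})=y_nc_n+\sum_{i=1}^{n-1}(y_i-y_{i+1})c_i$ is termwise nonnegative precisely because $(y_i)$ is nonincreasing and positive; your side remark that the monotonicity of $(x_i)$ is never used is also accurate, so you have in fact proved a slightly stronger statement. The comparison with the paper is a bit lopsided, though: the paper gives no argument at all here, its ``proof'' being only the citation to the proposition in \cite{Sou05}, so your write-up supplies a self-contained proof where the paper defers to the literature. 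What you have written is essentially the standard argument that weak majorization of the sequences $(\log x_i)$, $(\log y_i)$ (in the sense of partial sums) is preserved under the convex increasing map $\exp$ — a special case of the Hardy--Littlewood--P\'olya/Weyl circle of results — carried out by hand via the tangent-line bound plus summation by parts. That buys the reader independence from the reference and makes visible exactly which hypotheses do the work (positivity of the $y_i$, their monotonicity, and the partial-product domination), at the cost of a short analytic detour through logarithms that the citation route avoids.
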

\begin{proof}
    This lemma is identical to the proposition in \cite{Sou05}.
\end{proof}

This is the main theorem of this paper. 
\begin{theorem}\label{main}
    Let $n$ be a positive integer, and let $a_1 \leq a_2 \leq \cdots$ be a sequence of positive integers. Assume that $a_i \neq s_i(n)$ for some $i$, and that  
    \[
    \frac{1}{n} = \sum_{i=1}^\infty \frac{1}{a_i}.
    \]  
    Then,  
    \begin{equation*}
        \liminf_{i\to\infty} a_i^{2^{-i}} < c_n.
    \end{equation*}
\end{theorem}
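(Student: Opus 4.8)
The plan is to prove \cref{main} directly, without contradiction, in two moves: a reduction that forces the discrepancy between $(a_i)$ and $(s_i(n))$ onto the first coordinate, followed by a comparison of $(a_i)$ against a Sylvester-type sequence that is deliberately ``slowed down'' at the start.

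\textbf{Step 1: a Soundararajan-type bound and the reduction.} First apply \cref{pac_sou} with $u=\frac1n$, $t=1$, model sequence $u_i=\frac1{s_i(n)}$ (which satisfies the required equality by \cref{prop_syl}(1)--(2)), and $v_i=\frac1{a_i}$. The hypothesis $\sum_{i\le m}v_i+u\prod_{i\le m}v_i\le u$ holds because $\frac1n-\sum_{i\le m}\frac1{a_i}$ is a positive integer divided by $n\,a_1\cdots a_m$, hence at least $\frac1{n\,a_1\cdots a_m}=u\prod_{i\le m}v_i$. Thus $\sum_{i\le m}\frac1{a_i}\le\sum_{i\le m}\frac1{s_i(n)}$ for all $m$. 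Let $j$ be least with $a_j\ne s_j(n)$; since $a_i=s_i(n)$ for $i<j$, comparing partial sums up to $j$ forces $a_j\ge s_j(n)+1$. Now use \cref{prop_syl}(2)--(4): with $N=s_j(n)-1$, the shifted sequence $b_i=a_{i+j-1}$ satisfies $\sum_i\frac1{b_i}=\frac1N$, $b_1\ge N+2$, its model Sylvester sequence is $s_i(N)=s_{i+j-1}(n)$, and $\liminf_i b_i^{2^{-i}}=\bigl(\liminf_i a_i^{2^{-i}}\bigr)^{2^{j-1}}$ while $c_N=c_n^{2^{j-1}}$. Hence \cref{main} for $(a_i,n)$ is equivalent to \cref{main} for $(b_i,N)$, and we may assume from the outset that $a_1\ge n+2$.

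\textbf{Step 2: comparison with a slowed Sylvester sequence.} Apply \cref{pac_sou} a second time, with $u=\frac1n$, $t=2$, $u_1=\frac1{n+2}$, and $u_i$ $(i\ge2)$ forced by $\sum_{i\le m}u_i+\frac1n\prod_{i\le m}u_i=\frac1n$ for $m\ge2$; one checks $(u_i)$ is positive and strictly decreasing (for $n\ge2$; the case $n=1$ runs identically with $t=3$, $u_1=u_2=\frac13$, using $a_2\ge a_1\ge3$). Since $a_1\ge n+2$ gives $v_1=\frac1{a_1}\le u_1$, the lemma yields $\sum_{i\le m}\frac1{a_i}\le\sum_{i\le m}u_i$ for all $m$. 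The decisive point is that $\sum_{i\ge1}u_i=\frac1n$ as well (the product $\prod_{i\le m}u_i\to0$), so this inequality is identical to $\sum_{i>m}\frac1{a_i}\ge\sum_{i>m}u_i$ for all $m$: it controls the tails, not the partial sums. Unwinding the recursion identifies $\frac1{u_i}$ with $s_{i-2}(C)$ for $i\ge3$, where $C=\frac{n(n+2)(n+1)^2}{2}$; hence $\sum_{i>m}u_i=\frac1{s_{m-1}(C)-1}>c_C^{-2^{m-1}}$ for $m\ge2$, using $(s_k(C)-1)^{2^{-k}}<c_C$ from \cref{prop_cn}.

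\textbf{Step 3: conclusion.} Given $\sum_{i>m}\frac1{a_i}>c_C^{-2^{m-1}}$ and $\sum_{k\ge1}2^{-k}=1$, some $a_{m+k}$ with $k\ge1$ must satisfy $\frac1{a_{m+k}}\ge 2^{-k}\sum_{i>m}\frac1{a_i}$, so $a_{m+k}<2^{k}c_C^{2^{m-1}}$ and $a_{m+k}^{2^{-(m+k)}}<2^{\,k\,2^{-(m+k)}}\,c_C^{2^{-1-k}}\le 2^{\,k\,2^{-(m+k)}}\,c_C^{1/4}$; letting $m\to\infty$ along these indices gives $\liminf_i a_i^{2^{-i}}\le c_C^{1/4}$. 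It remains to check $c_C^{1/4}<c_n$: by \cref{prop_syl}(2),(4), $c_n^4=c_{\,s_3(n)-1}$ with $s_3(n)-1=n(n+1)(n^2+n+1)$, and $C<n(n+1)(n^2+n+1)$ because this reduces to $n(n-1)>0$; then \cref{prop_cn}(2) gives $c_C<c_n^4$. Together with Step 1 this proves \cref{main}. I expect the delicate part to be Step 2: \cref{pac_sou} only sees partial sums, and its output $\sum_{i\le m}\frac1{a_i}\le\sum_{i\le m}u_i$ naively looks like a lower bound on how fast $a_i$ grows; the trick is to choose the comparison sequence to also sum to exactly $\frac1n$, which flips the inequality into tail control, and to start it at $\frac1{n+2}$ rather than at the Sylvester value $\frac1{n+1}$ so that the resulting exponent is $c_C^{1/4}$ with $C$ small enough to be beaten by $c_n$ purely through the identities of \cref{prop_syl}. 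Verifying the monotonicity of $(u_i)$, pinning down $C$, and handling the genuinely different case $n=1$ are where the real bookkeeping lies.
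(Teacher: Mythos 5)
Your proposal is correct and follows essentially the same route as the paper: the same shift reduction to $a_1\ge n+2$, the same application of \cref{pac_sou} with $t=2$ to the comparison sequence starting $\frac{1}{n+2},\frac{2}{(n+1)^2},\frac{1}{s_{i-2}(C)}$ with $C=\frac{n(n+1)^2(n+2)}{2}$ (the paper's $l$), the same final comparison $c_C<c_n^4=c_{n(n+1)(n^2+n+1)}$, and the same separate treatment of $n=1$ with $t=3$. The only cosmetic differences are that you extract the liminf bound via a tail-sum/pigeonhole argument where the paper simply notes that $\sum_{i\le m}v_i\le\sum_{i\le m}u_i$ together with $\sum v_i=\sum u_i$ forces $v_i\ge u_i$ infinitely often, and that your Step 1 invokes \cref{pac_sou} once more with $t=1$ to get $a_j\ge s_j(n)+1$, which already follows from $\sum_{i\ge j}1/a_i=1/(s_j(n)-1)$ and $a_j\ne s_j(n)$.
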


\begin{proof}
    It is enough to consider the case where $a_1 \neq n+1$ by shifting the sequence. Indeed, assume that we have proven this theorem when $a_1 \neq n+1$.  
    Let $k$ be the smallest integer such that $a_k \neq s_k(n)$. Define $a'_i = a_{i+k-1}$ and $n' = s_k(n) - 1$. Then, by \cref{prop_syl} (1),  
    \[
        \sum_{i=1}^{\infty} \frac{1}{a'_i} =
        \frac{1}{n} - \sum_{i=1}^{k-1} \frac{1}{a_i} =
        \frac{1}{n} - \sum_{i=1}^{k-1} \frac{1}{s_i(n)} = \frac{1}{n'}.
    \]
    Also, by definition, $a'_1 = a_k \neq s_k(n) = n' + 1$. Therefore, by the hypothesis,  
    \[
        \liminf_{i\to \infty} {a'_i}^{2^{-i}} < c_{n'}.
    \]
    Raising both sides to the power of $2^{-(k-1)}$, we obtain  
    \[
        \liminf_{i\to\infty} a_i^{2^{-i}} < c_{n'}^{2^{-(k-1)}} = c_n.
    \]
    (The last equality follows from \cref{prop_syl} (4).)
    
    First, we consider the case $n \neq 1$. We define an integer $l$ as  
    $
        \frac{n(n+1)^2(n+2)}{2}
    $
    and define the sequence $u_i$ as follows:  
    \begin{align*}
        u_i =
        \begin{cases}
            \dfrac{1}{n+2} \vphantom{\dfrac{1}{s_{i-2}(l)}} & i=1, \\
            \dfrac{2}{(n+1)^2} \vphantom{\dfrac{1}{s_{i-2}(l)}} & i=2, \\
            \dfrac{1}{s_{i-2}(l)} & i \geq 3.
        \end{cases}
    \end{align*}

    Let $v_i = \frac{1}{a_i}$ for $i \in \mathbb{Z}_{>0}$, and set $u = \frac{1}{n}$, $t = 2$. We will verify that $u, u_i, v_i, t$ satisfy the conditions of \cref{pac_sou}. 
    Note that direct calculation shows  
    \[
        u = u_1 + u_2 + \frac{1}{l}, \quad uu_1u_2 = \frac{1}{l}.
    \]

    \begin{itemize}
        \item First equation:  
            Assume $m \geq t = 2$. Then,  
            \begin{align*}
                u - \sum_{i=1}^m u_i - u\prod_{i=1}^m u_i 
                &= \frac{1}{l} - \sum_{i=3}^m u_i - \frac{1}{l} \prod_{i=3}^m u_i \\
                &= \frac{1}{l} - \sum_{i=1}^{m-2} \frac{1}{s_{i}(l)} - \prod_{i=0}^{m-2} \frac{1}{s_{i}(l)} = 0.
            \end{align*}
            Here, we use (1) and (2) of \cref{prop_syl}. 
        \item Second inequality:  
            Assume $m \geq t = 2$ and let  
            \[
                q := \frac{1}{n} - \sum_{i=1}^m \frac{1}{a_i} =     \sum_{i=m+1}^{\infty} \frac{1}{a_i} > 0.
            \]  
            Since $q\bigl(n{\prod_{i=1}^m a_i}\bigr)$ is a positive integer, we have  
            \[
                u - \sum_{i=1}^m v_i = q \geq (n\prod_{i=1}^m a_i)^{-1} = u\prod_{i=1}^m v_i.
            \]
            This shows the second inequality.  
    
        \item Third inequality:  
            Assume $m < t = 2$. Then $m = 1$, and since we assume $a_1 \neq s_1(n)$, we have $n+2 \geq a_1$. Therefore, we obtain $u_1 \leq v_1$.  
    \end{itemize}

    Therefore, by \cref{pac_sou}, we have $\sum_{i=1}^n v_i \leq \sum_{i=1}^n u_i$ for all $n \geq t$. Combining this with the fact that $\sum_{i=1}^{\infty} v_i = u = \sum_{i=1}^{\infty} u_i$, it follows that $v_i \geq u_i$ for infinitely many $i$.  
    Thus,  
    \[
        \liminf_{i\to\infty} a_i^{2^{-i}} = \limsup_{i\to\infty} v_i^{-2^{-i}} \leq \lim_{i\to\infty} u_i^{-2^{-i}}
        = \lim_{i\to\infty} (s_{i-2}(l))^{2^{-i}} = c_l^{\frac{1}{4}}.
    \]  
    By \cref{prop_syl} (4), we have  
    $
        c_n^4 = c_{s_3(n)-1} = c_{n(n+1)(n^2+n+1)}.
    $
    Therefore, by \cref{prop_cn} (2), it is enough to show that  
    \[
        n(n+1)(n^2+n+1) > l = n(n+1) \frac{n^2+3n+2}{2}.
    \]  
    This follows from $n \geq 2$.

    For the case $n=1$, we define $u_i$ as follows:
    \begin{align*}
        u_i=
        \begin{cases}
        \dfrac{1}{3} \vphantom{\dfrac{1}{s_{i-3}(30)}} & i=1,2,\\
        \dfrac{3}{10} \vphantom{\dfrac{1}{s_{i-3}(30)}} & i=3,\\
        \dfrac{1}{s_{i-3}(30)} & i \geq 4.
        \end{cases}
    \end{align*}
    We can prove the statement using the same argument as above. (We apply \cref{pac_sou} for $t=3$.)

\end{proof}

\end{section}


\printbibliography
\end{document}